\newtheorem{thm}{Theorem}[section]
\newtheorem{lemma}[thm]{Lemma}
\newtheorem{prop}[thm]{Proposition}
\theoremstyle{definition}
\newtheorem{defn}[thm]{Definition}
\theoremstyle{remark}
\newtheorem{rem}[thm]{Remark}
\theoremstyle{question}
\newtheorem{que}[thm]{Question}
\theoremstyle{Conjecture}
\newtheorem{con}[thm]{Conjecture}
\numberwithin{equation}{section}
\begin{document}

\title[A counterexample]{groups with the same number of centralizers}%
\author{K. Khoramshahi and M.  Zarrin}%

\address{Department of Mathematics, University of Kurdistan, P.O. Box: 416, Sanandaj, Iran}%
 \email{ M.Zarrin@uok.ac.ir and kh.khoamshahi@sci.uok.ac.ir}
\begin{abstract}
 For any group $G$, let  $nacent(G)$ denote the set of all nonabelian centralizers of $G$.   Amiri and Rostami in (Publ. Math. Debrecen
87/3-4 (2015), 429-437) put forward  the following question:
   Let H and G be finite simple groups. Is it true that if $|nacent(H)| = |nacent(G)|$, then $G$ is isomorphic to $H$? 
  In this paper, among other things, we give a negative answer to this question.\\
{\bf Keywords}.
   Simple group, Centralizer, Isoclonic. \\
{\bf Mathematics Subject Classification (2010)}. 20D60.
\end{abstract}
\maketitle

\section{\textbf{ Introduction}}

For any group $G$, let $cent(G)$ denote the set of centralizers of $G$ and $nacent(G)$ denote the set of all non-abelian centralizers belonging to $cent(G)$.  We say that a group $G$ (not necessarily finite group) has $n$ centralizers (or $G$ is a $\mathcal{C}_n$-group) if $|cent(G)| = n$.  It is clear that a group is a $\mathcal{C}_1$-group if and only if it is abelian. 
The class of finite $\mathcal{C}_n$-groups was introduced by Belcastro and Sherman in \cite{bs} and investigated by many authors. For instance, see \cite{amr, zar1} for finite $\mathcal{C}_n$-groups and  \cite{zar4} for infinite $\mathcal{C}_n$-groups.

In 2005,  Ashrafi and Taeri  in \cite{ta}, because of the influence of $|cent(G)|$ on the structure of groups, raised the following question:  Let $G$ and $H$ be finite simple groups. Is it true that if $|cent(G)| = |cent(H)|$, then $G$ is isomorphic to $H$?
Zarrin in \cite {zar} disproved their question  with a counterexample. \\
We say that a group $G$ is a $CA$-group (or an $AC$-group) if the centralizer of every non-central element is abelian.  Therefore a group $G$ is $AC$-group if and only if  $|nacent(G)| = 1$. The authors in \cite{ar}, characterized all groups $G$ 
with  $|nacent(G)| = 2$ and finally raised the following question (see Question 2.13 of \cite{ar}):

\begin{que}\label{q2}
Let $G$ and $H$ be finite simple groups. Is it true that if $|nacent(G)| = |nacent(H)|$, then $G$ is isomorphic to $H$?
\end{que}
In section 2, we give a negative answer to this question.\\

It is easy to see that two simple groups are isomorphic if and only if they are isoclonic.
 Zarrin in \cite{zar3} proved that for every two isoclinic groups $G$ and $S$, $|cent(G)|=|cent(S)|$. The natural question is whether  the converse of his statement is true? 

\begin{que}\label{q3}
Let $G$ and $S$ be arbitrary groups. Is it true that if $|cent(G)|=|cent(S)|$, then $G$ and $S$ are isoclinic groups? 
\end{que}
  It is easy to see that this is not generally true. For example, the second author in \cite{zar} has  proved that,  $|cent(P SL(2, 23))| = |cent(A7)| = 807$ but $PSL(2, 23)\not \cong A_7$ and so they are not isoclinic groups.  Also let $D_{40}$ and $A_5$ be the dihedral group of degree $40$ and alternating group of degree $5$, respectively. It is not hard to see that $|cent(D_{40})|=|cent(A_5)|=22$; obviously they are not isoclinic.\\
Because of the importance of Question \ref{q3}, we are looking for special cases. For instance, in section 3, we will investigate Question \ref{q3} when $H$ is a subgroup of $G$. In fact, we conjecture that Question \ref{q3} would be true  under some  circumstances (see also Conjecture \ref{c}).\\

In the last section, we show that the derived length of a nilpotent $C_n$-group is at most $$2+[log^{n}_p]-\sum_{i=1}^{m-1}[log^{p_i+1}_p] ,$$ where $p=\min\{p_i \mid \text{G has non-abelian Sylow } p_i\text{-subgroup}\}$ and $m$ is the number of non-abelian Sylow $p_i$-subgroups of $G$ (this improve the main result in \cite{zar2}).

\section{\textbf{Counterexample to Question \ref{q2} }}

Obviously this question is not true for $|nacent(G)| = |nacent(H)|=1$.  In fact, if $G$ and $H$ are two simple $AC$-groups, then $|nacent(G)| = |nacent(H)|=1$ and  they are not necessarily isomorphic. For instance,  $PSL(2, 2^m)$ and $PSL(2,2^n)$ for $m\neq n\geq 3$ (note that if $q > 5$ and $q \equiv ~0 ~mod ~4$, then these groups are an $AC$-group).

Now we show that the above question is not true even if we have  $|nacent(G)| = |nacent(H)| \geq 2$, where $G$ and $H$ are two finite simple groups. For this, we need the following lemma. (In fact, finding $nacent(G)$ of a group itself is of independent interest as a pure combinatorial problem.)

\begin{lemma}
Let $G = P SL(2, q)$, where $q>5$ is a $p$-power {\rm(}$p$ prime{\rm)}. Then

\begin{itemize}
\item[(I)] If $q\equiv  0 ~mod~ 4$. Then $|nacent(G)|=1$.
\item[(II)] If $q\equiv 1~mod ~4$. Then $|naent(G)| = (q^2 + q + 2)/2$.
\item[(III)] If $q\equiv 2 ~mod ~4$. Then  $|naent(G)| = (q^2 -q + 2)/2$.

\end{itemize}

\end{lemma}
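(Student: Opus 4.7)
The plan is to identify, for each conjugacy class of $G = PSL(2,q)$, the centralizer of a representative, determine which centralizers are non-abelian, and count the distinct ones. I will partition the non-identity elements of $G$ in the classical way (via the Jordan form of a lift to $SL(2,q)$): unipotent, split semisimple (diagonalisable over $\mathbb F_q$), non-split semisimple, and---when $q$ is odd---involutions arising from order-$4$ elements of $SL(2,q)$.

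For Case (I), $q\equiv 0\pmod 4$, we have $PSL(2,q)=SL(2,q)$ and every non-identity element is either unipotent, with centralizer an elementary abelian Sylow $2$-subgroup of order $q$, or semisimple, with centralizer a maximal torus of order $q-1$ or $q+1$; all of these are abelian. Hence $nacent(G)=\{G\}$ and $|nacent(G)|=1$.

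For $q$ odd, a direct computation in $SL(2,q)$ descended to $PSL(2,q)$ shows that the centralizer of a unipotent element is elementary abelian of order $q$, and the centralizer of a semisimple element of order greater than $2$ is cyclic of order $(q\pm 1)/2$. For an involution $\bar x$ (the image of some $x\in SL(2,q)$ with $x^2=-I$) one has $C_G(\bar x)=N_{SL(2,q)}(\langle x\rangle)/\{\pm I\}$, which is dihedral of order $q-1$ when $-1$ is a square in $\mathbb F_q$ (i.e.\ $q\equiv 1\pmod 4$) and of order $q+1$ otherwise. Since $q>5$ these dihedral groups are non-abelian, so $nacent(G)$ consists of $G$ together with all involution centralizers.

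To count involution centralizers, I would observe that in each subcase $n:=|C_G(\bar x)|/2$ is even, so the centre of the dihedral group $C_G(\bar x)$ has order $2$ and contains a unique involution; this involution must be $\bar x$ itself. Consequently, distinct involutions of $G$ give distinct centralizers. Since the involutions form a single $G$-conjugacy class, orbit--stabiliser gives
\[
\bigl|\{\bar x\in G:\bar x^2=1,\ \bar x\ne 1\}\bigr|=\frac{|G|}{|C_G(\bar x)|}=\begin{cases}q(q+1)/2,&q\equiv 1\pmod 4,\\ q(q-1)/2,&q\equiv 3\pmod 4,\end{cases}
\]
and adding $1$ for $G$ itself yields $(q^2+q+2)/2$ and $(q^2-q+2)/2$ respectively, which matches the asserted formulas in (II) and (III). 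The main bookkeeping step is verifying the dihedral structure of the involution centralizer via the split/non-split torus analysis in $SL(2,q)$; the injectivity of the map ``involution $\mapsto$ centralizer'', coming from the parity of $n$, is the small observation that makes the count clean.
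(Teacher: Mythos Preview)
Your argument is correct and self-contained, but it proceeds along a different axis from the paper. The paper does not analyse centralizers element by element; instead it quotes two counting results from the literature---the total number of centralizers $|cent(G)|=(3q^2+3q+4)/2$ from \cite{zar} and the number of \emph{abelian} centralizers $q^2+q+1$ from Lemma~3.21 of \cite{abd}---and obtains $|nacent(G)|$ by subtraction (and similarly in Case~(III)). Your route, by contrast, singles out the involution centralizers as the only proper non-abelian ones, identifies them as dihedral of order $q\mp 1$, and then uses the parity of $(q\mp 1)/2$ to show the map ``involution $\mapsto$ centralizer'' is injective, so that orbit--stabiliser gives the count directly. The paper's approach is shorter on the page because the structural work is outsourced to \cite{abd,zar}; yours is more transparent and requires no external input beyond the classical description of element centralizers in $PSL(2,q)$. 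Both yield the same numbers, and your treatment of Case~(III) as $q\equiv 3\pmod 4$ is the intended reading (the ``$q\equiv 2\pmod 4$'' in the statement is evidently a typo, since that congruence is incompatible with $q>5$ a prime power).
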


\begin{proof}
Case (I). It is clearly,  as $G$ is an $AC$-group. \\
Case (II).  In this case, according to  Lemma 3.21 of \cite{abd}, one can obtain that the number of abelian centralizers of $G$ is $q^2 +q+1$. Therefore, by Case (2) of Theorem 1.1 of \cite{zar},  we have $$|nacent(G)|=(3q^2 + 3q + 4)/ 2-(q^2 +q+1)=(q^2 + q + 2)/2.$$
Case (III). Similarly. 
\end{proof}

Now it is easy to see by GAP \cite{gap}, that for $PSU(3,3)$ (the projective special unitary group of degree 3 over the finite field of order 3)  we have $|naent(PSU(3,3))| =92$ and also by Case 2 of Lemma 1.4, $|nacent(P S L(2, 13)| =92$. Obviously, $PSU(3,3)\not \cong PSL(2,13)$ (in fact $|PSU(3,3)|\neq |PSL(2,3)|$ ).

\begin{rem}
It seems that, in view of the above questions, two indices $|cent(G)|$ and $|nacent(G)|$ have the same influence on the structure of groups. 
So the natural question that would be risen is that which one has stronger influence  on the other one? On the other hand, if there are  two groups, say $G$ and $H$ such that $|cent(G)|=|cent(H)|$, then does it guarantee $|nacent(G)|=|nacent(H)|$? (And vice versa?) The answer is no. For instance, see the following groups:
\begin{itemize}
\item[(1)] $|cent(A_7)|=|cent(PSL(2,23))|=807$, but $|nacent(A_7)|=141$ and \\
$|nacent(PSL(2,7))|=254$.
\item[(2)] $|nacent(A_5)|=|nacent(PSL(2,8))|=1$, but $|cent(A_5)|=22$ and \\
$|cent(PSL(2,8))|=74$. (Even though, in this case, there are lots of groups that satisfy this condition, such as every two simple $AC$-groups.)
\end{itemize}

\end{rem}

Finally, in view of the above counterexamples that have been mentioned for the questions in the introduction, we can pose the following conjecture: 
 
\begin{con}\label{c}
Let $G$ and $S$ be finite groups. Is it true that if $|cent(G)| = |cent(S)|$ and $|G'|=|S'|$, then $G$ is isoclonic to $S$?
\end{con}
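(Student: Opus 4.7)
The plan is to reduce this conjecture to a statement about centerless quotients and then attack it by induction on $n=|cent(G)|$, using the classifications of $\mathcal{C}_n$-groups available for small $n$. The starting observation is that both invariants are already known to be preserved under isoclinism---the cited theorem of Zarrin gives $|cent(G)|=|cent(S)|$ for isoclinic $G,S$, and isoclinism forces $G'\cong S'$ by definition---so the conjecture is the assertion that the pair $(|cent(G)|,|G'|)$ is a \emph{complete} system of invariants for the isoclinism class. My first reduction uses the bijection $cent(G)\leftrightarrow cent(G/Z(G))$ given by $C_G(x)\mapsto C_{G/Z(G)}(xZ(G))$, which is well-defined because $Z(G)$ lies inside every centralizer; this lets me replace $G,S$ by their centerless quotients $\bar G:=G/Z(G)$ and $\bar S:=S/Z(S)$ without changing the hypothesis on $|cent|$.

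It therefore suffices to show that the hypotheses force $\bar G\cong\bar S$ and that one can pick such an isomorphism together with an isomorphism $G'\cong S'$ so that the commutator maps $\bar G\times\bar G\to G'$ and $\bar S\times\bar S\to S'$ agree. For the existence of $\bar G\cong\bar S$, I would induct on $n$ using the structural results on $\mathcal{C}_n$-groups (Belcastro--Sherman and the subsequent work referenced in the introduction). For small $n$ there are only a few possible isomorphism types of $\bar G$, and within each type $|G'|$ typically separates the remaining possibilities; this is consistent with the fact that the two counterexamples to Question~\ref{q3} exhibited in the introduction, namely $(PSL(2,23),A_7)$ and $(D_{40},A_5)$, are ruled out precisely because their derived subgroups differ in order.

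The hard part will be the commutator compatibility. Two groups can have isomorphic centerless quotients and isomorphic derived subgroups but inequivalent commutator pairings, and nothing in the two numerical invariants $|cent(G)|$ and $|G'|$ speaks directly to this pairing. I would first test the conjecture on nilpotent groups of class $2$, where $[\,\cdot\,,\,\cdot\,]:\bar G\otimes\bar G\to G'$ is a bilinear map and arithmetic constraints from $|G'|$ may pin down the pairing up to equivalence; this would give both evidence and a diagnostic for whether additional invariants are needed in full generality. I expect this compatibility issue---rather than the identification of $\bar G$---to be the genuine obstacle, and a deliberately constructed pair of $2$-generated $p$-groups with the same $|cent|$ and the same $|G'|$ but different isoclinism classes would be the first thing I would hunt for before committing to a full proof.
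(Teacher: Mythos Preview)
The statement is labelled a \emph{Conjecture} in the paper and is left open there; the authors pose it after observing that the known counterexamples to Question~\ref{q3} happen to have derived subgroups of different orders, but they offer no proof and claim none. There is therefore no argument in the paper against which to compare your outline.

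Your proposal is in any case not a proof but a research plan, as you yourself acknowledge in the final paragraph. Two concrete weaknesses are worth naming. First, the proposed induction on $n=|cent(G)|$ via classifications of $\mathcal{C}_n$-groups cannot scale: such classifications are known only for a handful of small values of $n$, and nothing in your outline supplies an inductive mechanism linking the $\mathcal{C}_n$ case to smaller ones. Second, passing to the centerless quotients $\bar G,\bar S$ preserves $|cent|$ but loses control of $|G'|$, since $(G/Z(G))'\cong G'/(G'\cap Z(G))$; after the reduction you no longer hold the second hypothesis in a directly usable form. Your closing suggestion---to hunt among class-$2$ $p$-groups for a pair with matching $|cent|$ and $|G'|$ but inequivalent commutator pairings---is the honest next step, and it reflects that the conjecture is genuinely open and may well be false.
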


\section{\textbf{Groups with the same number of centralizers}}

We start with the following definition.

\begin{defn}
We say that groups $G$ and $S$ are isoclinic, if there are isomorphisms

 $$\alpha:\frac{G}{Z(G)}\rightarrow \frac{S}{Z(S)} ~~ \text{and~~} \beta:G'\rightarrow S'$$
 such that for every $ g_1, g_2 \in G~~ \text{and~~}s_1, s_2 \in S $, if

 $$\alpha(g_1{Z(G)})= \ {s_1 Z(S) } ~~\text{and~~}   \alpha(g_2{Z(G)})= \ {s_2 Z(S)}$$
 then $\beta ([ g_1, g_2])=[ s_1, s_2]$. The pair $(\alpha ,\beta)$ is called a isoclinism from $G$ to $S$.
\end{defn}

First we give the following lemma.

\begin{lemma}\label{pt}
For a group $G$, we have the following statements:
\item[(1)]
If for every subgroup $H$ of group $G$, $|cent(H)|=|cent(G)|$ then $G$ is an abelian group.
\item [(2)]If for every non-abelian subgroup $H$ of group $G$, $|cent(H)|=|cent(G)|$ then $G$ is an $AC$-group {\rm (}clearly the converse is not true {\rm )}.
\end{lemma}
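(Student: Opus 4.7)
The plan is to deduce both parts from one uniform observation about how centralizers behave under restriction to subgroups. For any $H \le G$ and any $g \in H$ we have $C_H(g) = C_G(g) \cap H$, so the assignment $C_G(g) \mapsto C_G(g) \cap H$ defines a well-defined surjection $\pi \colon \{C_G(g) : g \in H\} \to cent(H)$. Combining $\pi$ with the obvious inclusion $\{C_G(g) : g \in H\} \subseteq cent(G)$ yields the general estimate $|cent(H)| \le |cent(G)|$, and it identifies the two possible sources of strict inequality: failure of surjectivity (some $C_G(y)$ with $y \in G\setminus H$ that is not of the form $C_G(h)$ for any $h \in H$), or failure of injectivity of $\pi$.

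For part (1), I would take $H$ to be any cyclic, hence abelian, subgroup of $G$ so that $|cent(H)|=1$. The hypothesis forces $|cent(G)|=1$, i.e. $G$ is abelian.

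For part (2), I would argue by contrapositive. Suppose $G$ is not an $AC$-group; then there is an element $x \in G \setminus Z(G)$ whose centralizer $H := C_G(x)$ is non-abelian. The decisive observation is that $x \in Z(H)$ (because $H$ is its centralizer in $G$), while $x \notin Z(G)$; together with the easy inclusion $Z(G) \le Z(H)$, this gives $Z(H) \supsetneq Z(G)$. Feeding this witness into $\pi$: the elements $e$ and $x$ both lie in $Z(H)$, so $C_H(e)=C_H(x)=H$, while the preimages $C_G(e)=G$ and $C_G(x)=H$ are distinct members of $cent(G)$. Hence $\pi$ is not injective, which gives $|cent(H)| < |\{C_G(g) : g \in H\}| \le |cent(G)|$, contradicting the hypothesis that $|cent(H)|=|cent(G)|$.

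The only non-routine step is establishing the strict inequality in (2); the clean point is that the choice $H = C_G(x)$ manufactures, for free, an element of $Z(H) \setminus Z(G)$, and any such element forces the collapse in $\pi$ that delivers strict inequality. This is precisely why the non-abelian centralizer $C_G(x)$ is the correct witness subgroup to test against the hypothesis.
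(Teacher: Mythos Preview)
Your proof is correct and follows the same strategy as the paper: for part~(2) both argue by contrapositive, choosing $H = C_G(x)$ for some $x \in G \setminus Z(G)$ with non-abelian centralizer. The only difference is that the paper obtains the strict inequality $|cent(C_G(x))| < |cent(G)|$ by citing an external result (Lemma~2.1 of \cite{zar2}), whereas you give a self-contained argument via the collapse of $C_G(e)=G$ and $C_G(x)=H$ under the restriction map $\pi$; indeed, your map $\pi$ and the chain $|cent(H)| \le |\{C_G(h):h\in H\}| \le |cent(G)|$ are exactly what the paper itself sets up in its very next lemma.
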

\begin{proof}
\item[(1)]
Clearly.
\item [(2)]
Assume that  $G$ is not an $AC$-group and $C_G (x)$ is a nonabelian subgroup, for some $x\in G\ Z(G)$.  Therefore $|cent(C_G (x))|=|cent(G)|$ and by Lemma $2.1$ of \cite{zar2}, it is a contradiction.
\end{proof}

We show that Question \ref{q3} is satisfied  for special maximal subgroups. For this, we need the following lemma. 
\begin{lemma}\label{pi}
Let $H$ be a subgroup of an arbitrary group $G$ such that $|cent(H)|=|cent(G)|$. Then $H\cap Z(G)=Z(H)$ and $\frac{H}{Z(H)}\cong \frac{HZ(G)}{Z(G)}$. In particular,  $H$ is isoclonic with $HZ(G)$.
\end{lemma}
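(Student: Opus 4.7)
The plan is to first pin down $Z(H)$ using the centralizer-count hypothesis, and then to deduce both the quotient isomorphism and the isoclinism as routine consequences. The key observation is that the map
$$\psi\colon \{C_G(h):h\in H\}\ \longrightarrow\ cent(H),\qquad C_G(h)\ \mapsto\ C_G(h)\cap H=C_H(h)$$
is well defined and surjective. Since its source sits inside $cent(G)$, one gets $|cent(H)|\le|\{C_G(h):h\in H\}|\le|cent(G)|$, and the hypothesis forces equality throughout. In particular $\{C_G(h):h\in H\}=cent(G)$: every centralizer in $G$ is realized by some element of $H$.

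From this I would deduce the nontrivial inclusion $Z(H)\subseteq H\cap Z(G)$. Given $z\in Z(H)$ and any $g\in G$, choose $h\in H$ with $C_G(g)=C_G(h)$; then $z$ commutes with $h$ (as $z\in Z(H)$), so $z\in C_G(h)=C_G(g)$, whence $zg=gz$. Since $g$ was arbitrary, $z\in Z(G)$, and the reverse inclusion $H\cap Z(G)\subseteq Z(H)$ is immediate. The second isomorphism theorem then supplies
$$\frac{H}{Z(H)}\;=\;\frac{H}{H\cap Z(G)}\;\cong\;\frac{HZ(G)}{Z(G)}.$$

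For the isoclinism claim I would first check that $Z(HZ(G))=Z(G)$: the inclusion $\supseteq$ is immediate, and for $\subseteq$ any $hz\in Z(HZ(G))$ with $h\in H$, $z\in Z(G)$ forces $h$ to commute with every element of $H$, so $h\in Z(H)\subseteq Z(G)$. Because $Z(G)$ is central, a direct expansion gives $[h_1z_1,h_2z_2]=[h_1,h_2]$ for $h_i\in H$ and $z_i\in Z(G)$, and hence $(HZ(G))'=H'$. Taking $\alpha\colon H/Z(H)\to HZ(G)/Z(G)$ to be the map $hZ(H)\mapsto hZ(G)$ constructed above and $\beta\colon H'\to (HZ(G))'$ the identity, the very same commutator identity verifies the compatibility $\beta([h_1,h_2])=[s_1,s_2]$ required by the definition of isoclinism.

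The main obstacle is the cardinality step in the first paragraph: deducing $\{C_G(h):h\in H\}=cent(G)$ from equality of sizes relies on the fact that a subset with the same cardinality as its host must be the whole set, which is automatic when $|cent(G)|<\infty$. The word ``arbitrary'' in the statement should presumably be understood to permit this, since otherwise the centralizer-correspondence argument would have to be replaced by a more delicate set-theoretic input; once the equality of sets is in hand, every remaining step is an elementary computation.
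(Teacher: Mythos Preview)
Your proof is correct and follows essentially the same route as the paper: both introduce the intermediate set $\{C_G(h):h\in H\}$ (the paper calls it $cent_G(H)$), obtain the chain $|cent(H)|\le|cent_G(H)|\le|cent(G)|$, and use the forced equality $cent_G(H)=cent(G)$ to identify $Z(H)$ with $H\cap Z(G)$ (the paper does this by intersecting all the centralizers, you by an equivalent element-wise argument). Your treatment of the isoclinism is more explicit than the paper's one-line remark that $(HZ(G))'=H'$, and your finiteness caveat is apt---the paper's own proof tacitly assumes $n=|cent(G)|$ is finite when it writes $Z(G)=\bigcap_{i=1}^{n}C_G(h_i)$.
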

\begin{proof}
First off, for the group $G$ and its subgroup $H$, we define set $cent_G(H)=\{C_G(h) \mid h\in H\}$, where $C_G (h)=\{x\in G \mid xh=hx\}$ is the centralizer of $h$ in $G$. Since for every $x\in G$, $C_G (x)\cap H=C_H (x)$, one can follow that $$|cent(H)|\leq |cent_G(H)|\leq |cent(G)|.$$ \\
Now suppose that $|cent(H)|=|cent(G)|=n$. Therefore  $|cent_G(H)|=|cent(G)|$. From which one can follow that  $Z(G)=\bigcap_{i=1}^{n}C_G (x_i)=\bigcap_{i=1}^{n}C_G (h_i)$ with $ x_i \in G, h_i \in H $. Therefore $Z(G)\cap H=\bigcap_{i=1}^{n}(C_G (x_i)\cap H)=\bigcap_{i=1}^{n}C_H (h_i)=Z(H)$.

As $\frac{HZ(G)}{Z(G)}\cong \frac{H}{Z(G)\cap H}=\frac{H}{Z(H)}$,  $H$ is isomorphic  with subgroup $HZ(G)$ of $G$ (note that $(HZ(G))'= H'$).
\end{proof}

We note that, if $H$ be a subgroup of group $G$, then $H\cap Z(G)\leq Z(H)$. In general, the converse is not true. For example, if $G$ be a centerless group and $H=\langle x \rangle$ be a cyclic subgroup of $G$. Then $H=Z(H)\neq Z(G)=\langle e \rangle$ and $Z(H)\nleq Z(G)\cap H$.

\begin{prop}\label{ok}
Let $M$ be a maximal subgroup of group $G$ such that $|cent(M)|=|cent(G)|$. Then  either $Z(M)=Z(G)$ or $M$ is isoclonic with $G$.
\end{prop}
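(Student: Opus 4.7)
The plan is to exploit Lemma \ref{pi} directly and then split on cases using the maximality of $M$. Since $M$ is a subgroup of $G$ with $|cent(M)| = |cent(G)|$, Lemma \ref{pi} immediately gives two pieces of information: first, $Z(M) = M \cap Z(G)$; second, $M$ is isoclinic with the subgroup $MZ(G)$ of $G$.

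The key observation is that $MZ(G)$ is a subgroup of $G$ satisfying $M \leq MZ(G) \leq G$. By the maximality of $M$ in $G$, there are only two possibilities: either $MZ(G) = M$ or $MZ(G) = G$. I would dispatch these in turn.

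In the case $MZ(G) = M$, we get $Z(G) \subseteq M$, so $M \cap Z(G) = Z(G)$. Combined with the first conclusion of Lemma \ref{pi}, this yields $Z(M) = Z(G)$, which is the first alternative in the statement. In the case $MZ(G) = G$, the second conclusion of Lemma \ref{pi}, namely that $M$ is isoclinic with $MZ(G)$, becomes exactly the assertion that $M$ is isoclinic with $G$, which is the second alternative.

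Because Lemma \ref{pi} does essentially all the work, I expect no real obstacle here; the proof amounts to applying the lemma and noting that the maximality of $M$ forces $MZ(G) \in \{M, G\}$. The only point worth double-checking is that in the first case one really does obtain the set-theoretic equality $Z(M) = Z(G)$ (not merely an inclusion), but this is immediate from $Z(M) = M \cap Z(G)$ together with $Z(G) \subseteq M$.
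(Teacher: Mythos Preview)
Your proposal is correct and follows essentially the same route as the paper: split on $MZ(G)\in\{M,G\}$ by maximality and apply Lemma~\ref{pi}. The only cosmetic difference is that in the case $MZ(G)=G$ the paper re-derives the isoclinism by noting $G'=(MZ(G))'=M'$ and $M/Z(M)\cong G/Z(G)$, whereas you simply quote the ``$H$ is isoclinic with $HZ(G)$'' clause of Lemma~\ref{pi} directly.
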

\begin{proof}
Suppose that $M$ is a maximal subgroup of $G$. Therefore $M\leq MZ(G)\leq G$ and so either $M=MZ(G)$ or $MZ(G)=G$.

 If $M=MZ(G)$, then $Z(G)\leq M$ and so $Z(G)\cap M=Z(G)$. It follows, by Lemma \ref{pi}, that $Z(G)= Z(M)$.

If $MZ(G)=G$, then $G'= (MZ(G))'=M'$. On the other hand, by Lemma \ref{pi}, $\frac{M}{Z(M)}\cong \frac{MZ(G)}{Z(G)}=\frac{G}{Z(G)}$. Therefore $M$ and $G$ are isoclinic.
\end{proof}

In the next theorem we show that the question is satisfied for some $n$, where $|cent(G)|=n$.

\begin{thm}
Let $G$ be a non-abelian arbitrary group and $H\leq G$. If $|cent(H)|=|cent(G)|=n$, where $n<6$, then $H$ and $G$ are isoclinic groups.
\end{thm}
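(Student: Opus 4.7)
The plan is to reduce the problem to a finite case analysis on $G/Z(G)$ and $H/Z(H)$, then show that $HZ(G)=G$, at which point isoclinism follows almost formally from Lemma \ref{pi}.

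First I would note that since $G$ is non-abelian, $n\geq 4$, and it is classical (Belcastro--Sherman) that no group is a $\mathcal{C}_2$- or $\mathcal{C}_3$-group, so $n\in\{4,5\}$. I would then recall the known classification for small $n$: a $\mathcal{C}_4$-group satisfies $G/Z(G)\cong \mathbb{Z}_2\times \mathbb{Z}_2$, and a $\mathcal{C}_5$-group satisfies $G/Z(G)\cong \mathbb{Z}_3\times \mathbb{Z}_3$ or $G/Z(G)\cong S_3$. The same dichotomy applies to $H$ (which is non-abelian, since otherwise $|cent(H)|=1\neq n$), so $H/Z(H)$ lies in the same short list.

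Next, by Lemma \ref{pi}, $Z(H)=H\cap Z(G)$ and there is a canonical isomorphism
\[
\alpha\colon \frac{H}{Z(H)}\;\xrightarrow{\ \cong\ }\;\frac{HZ(G)}{Z(G)}\;\leq\;\frac{G}{Z(G)},\qquad hZ(H)\mapsto hZ(G).
\]
I would then run the case analysis to prove $HZ(G)=G$. When $n=4$, both $|H/Z(H)|$ and $|G/Z(G)|$ equal $4$, forcing $HZ(G)/Z(G)=G/Z(G)$. When $n=5$ and $G/Z(G)\cong \mathbb{Z}_3\times \mathbb{Z}_3$, the only subgroup of order $6$ is absent from $\mathbb{Z}_3\times \mathbb{Z}_3$, so $H/Z(H)$ cannot be $S_3$; hence $|H/Z(H)|=9=|G/Z(G)|$. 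When $n=5$ and $G/Z(G)\cong S_3$, the group $S_3$ has no subgroup of order $9$, so $H/Z(H)\cong S_3$ and again the orders match. In every case $HZ(G)=G$.

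Finally I would assemble the isoclinism. From $HZ(G)=G$ we get $G'=(HZ(G))'=H'$, and I can take $\beta=\mathrm{id}_{H'}\colon H'\to G'$. For any $h_1,h_2\in H$, if $\alpha(h_iZ(H))=g_iZ(G)$ then $g_i=h_iz_i$ with $z_i\in Z(G)$, so $[g_1,g_2]=[h_1,h_2]=\beta([h_1,h_2])$, verifying the compatibility in the definition of isoclinism. The main obstacle, as I see it, is the classification step: it is what prevents $HZ(G)/Z(G)$ from being a proper subgroup of $G/Z(G)$, and without the tight restriction on the possible structures of $H/Z(H)$ and $G/Z(G)$ for $n\leq 5$ the conclusion would fail. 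Once that step is in hand, everything else is a direct consequence of Lemma \ref{pi} together with the fact that elements of $Z(G)$ drop out of commutators.
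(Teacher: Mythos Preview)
Your proof is correct and follows essentially the same route as the paper: invoke the classification of $G/Z(G)$ for $n<6$, use Lemma~\ref{pi} to embed $H/Z(H)$ into $G/Z(G)$, argue the embedding is onto so that $HZ(G)=G$, and conclude $G'=H'$. The only cosmetic difference is in the surjectivity step: the paper observes in one line that every proper subgroup of $C_2\times C_2$, $S_3$, or $C_3\times C_3$ is cyclic (forcing $H$ abelian, a contradiction), whereas you reach the same conclusion via a short case analysis on orders.
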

\begin{proof}
Since $|cent(H)|=|cent(G)|=n<6$, then, by Theorem 3.5 of \cite{zar3}, $\frac{G}{Z(G)}$ is isomorphic to $C_2\times C_2$, $S_3$ or $C_3\times C_3$. On the other hand,  by Lemma \ref{pi}, we have $\frac{H}{Z(H)}\cong \frac{HZ(G)}{Z(G)}\leq \frac{G}{Z(G)}$. If $\frac{HZ(G)}{Z(G)}<\frac{G}{Z(G)}$, since every subgroups of $C_2\times C_2$, $S_3$ and $C_3\times C_3$ are cyclic, $\frac{H}{Z(H)}$ is a cyclic group and whereby  $H$ is abelian, a contradiction. Therefore $\frac{H}{Z(H)}\cong \frac{HZ(G)}{Z(G)}=\frac{G}{Z(G)}$, so $HZ(G)=G$ and $G'= H'$. Thus $H$ and $G$ are isoclinic groups.
\end{proof}

Finally, we discuss a wide classes of groups in which $|cent(G)|=|cent(G')|$ and $G$ is isoclonic with $G'$.

\begin{prop}
Let $G$ be a finite group such that $\frac{G}{Z(G)}$ is isomorphic with a simple group. Then $G$ and $G'$ are isoclinic groups.
\end{prop}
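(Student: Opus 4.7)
The plan is to exhibit an explicit isoclinism $(\alpha,\beta)$ between $G$ and $G'$, taking $\beta$ to be the identity on $G'$ and $\alpha$ the isomorphism supplied by the second isomorphism theorem. The whole strategy rests on two structural facts that both follow from simplicity of $G/Z(G)$.

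First I would show that $G=G'Z(G)$. Since $G/Z(G)$ is simple and cannot be cyclic of prime order (otherwise $G/Z(G)$ would be cyclic, forcing $G$ abelian and $G/Z(G)$ trivial), it is nonabelian simple. Now $G'Z(G)/Z(G)$ is normal in $G/Z(G)$ and cannot be trivial (else $G/Z(G)$ would be abelian), so it equals $G/Z(G)$, whence $G=G'Z(G)$.

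Next I would establish the two ingredients that make the isoclinism work. Using $G=G'Z(G)$ and the fact that central factors do not affect commutators, one gets $G'=[G,G]=[G'Z(G),G'Z(G)]=[G',G']=G''$, so $G'$ is perfect and $\beta=\mathrm{id}_{G'}$ is a valid isomorphism $G'\to (G')'$. Moreover, any $x\in Z(G')$ commutes with $G'$ and (trivially) with $Z(G)$, hence with all of $G=G'Z(G)$; therefore $Z(G')\leq Z(G)\cap G'$, and the reverse inclusion is automatic, so $Z(G')=Z(G)\cap G'$. The second isomorphism theorem then supplies
\[
\alpha\colon\ G/Z(G)=G'Z(G)/Z(G)\ \cong\ G'/(G'\cap Z(G))=G'/Z(G').
\]

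Finally, I would verify commutator compatibility. For $g_i=g_i'z_i$ with $g_i'\in G'$ and $z_i\in Z(G)$, the map $\alpha$ sends $g_iZ(G)$ to $g_i'Z(G')$, and centrality of $z_1,z_2$ yields $[g_1,g_2]=[g_1',g_2']$; analogously, any $s_i\in g_i'Z(G')$ satisfies $[s_1,s_2]=[g_1',g_2']$ because $Z(G')\subseteq Z(G)$. Hence $\beta([g_1,g_2])=[g_1',g_2']=[s_1,s_2]$, confirming $(\alpha,\beta)$ is an isoclinism between $G$ and $G'$. The only delicate step is the inclusion $Z(G')\leq Z(G)$, which depends crucially on $G=G'Z(G)$; everything else is mechanical.
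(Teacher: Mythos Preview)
Your proof is correct and follows the same overall outline as the paper: establish $G=G'Z(G)$ from simplicity of $G/Z(G)$, deduce $G'=G''$, show $Z(G')=Z(G)\cap G'$, and conclude via the second isomorphism theorem. The one substantive difference lies in how you obtain the inclusion $Z(G')\leq Z(G)$. The paper invokes simplicity a second time, arguing that $Z(G')/(Z(G)\cap G')$ is a normal subgroup of the simple group $G'/(Z(G)\cap G')\cong G/Z(G)$ and must therefore be trivial; you instead observe directly that any element of $Z(G')$ centralizes both $G'$ and $Z(G)$, hence all of $G=G'Z(G)$. Your argument at this step is more elementary and makes transparent that simplicity is needed only once, namely to force $G=G'Z(G)$; after that the rest is purely formal. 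You also explicitly verify the commutator compatibility condition for the pair $(\alpha,\beta)$, which the paper leaves implicit.
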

\begin{proof}
As $\frac{G}{Z(G)}$ is isomorphic with a simple group, so we have $$\frac{G}{Z(G)}=({\frac{G}{Z(G)}})'=\frac{G'Z(G)}{Z(G)}\cong \frac{G'}{Z(G)\cap G'}.$$  It follows that $G'=G''$. For complete proving, it is enough to show that $Z(G)\cap G'=Z(G')$. It is clear that $Z(G)\cap G'\leq Z(G')$. Now if $Z(G')\nleq Z(G)$, then $\frac{Z(G')}{Z(G)\cap G'}$ is a normal subgroup of  $\frac{G}{Z(G)}\cong \frac{G'}{Z(G)\cap G'}$,  a contradiction. Thus $\frac{{G}}{Z(G)}=\frac{G'}{Z(G')}$, so $G$ and $G'$ are isoclinic groups.

\end{proof}

\section{\textbf{ The derived length of a nilpotent $C_n$-group}}

The author in \cite{zar4} showed that  the derived length of an arbitrary solvable $C_n$-group is at most $n$ and then he improved this bound for nilpotent groups. In fact, he shoved that, the derived length of an arbitrary nilpotent $C_n$-group is at most $[n-1/2]+1$, see \cite{zar2}. Here, with the same strategy,  we improve previous results for finite nilpotent $C_n$-groups, as follows:

\begin{thm}
Let $G$ be  a  finite non-abelian  nilpotent $C_n$-group. Then the derived length of $G$, say $d(G)$ is at most $2+[log^{n}_p]-\sum_{i=1}^{m-1}[log^{p_i+1}_p]$, where $p=\min\{p_i \mid \text{G has non-abelian Sylow } p_i\text{-subgroup}\}$ and $m$ is the number of non-abelian Sylow $p_i$-subgroups of $G$.
\end{thm}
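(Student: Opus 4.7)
Since $G$ is finite nilpotent, it is the internal direct product of its Sylow subgroups. Write $G=Q_1\times\cdots\times Q_r$ and, after relabeling, let $P_1,\ldots,P_m$ be precisely the non-abelian Sylow subgroups among the $Q_j$, corresponding to primes $p_1,\ldots,p_m$, with $p=\min_i p_i$.

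Two structural facts drive the reduction. First, centralizers in a direct product split coordinate-wise, $C_G((x_1,\ldots,x_r))=\prod_i C_{Q_i}(x_i)$, so
\[
|cent(G)|=\prod_{i=1}^{r}|cent(Q_i)|=\prod_{i=1}^{m}|cent(P_i)|,
\]
because each abelian factor contributes exactly one centralizer. Second, the derived series of a direct product is the coordinate-wise direct product of the derived series of the factors, whence $d(G)=\max_{1\le i\le m}d(P_i)$. It therefore suffices to bound the derived length of one non-abelian Sylow subgroup.

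Next I would use the classical lower bound $|cent(P_i)|\ge p_i+2\ge p_i+1$ for every non-abelian $p_i$-group $P_i$, a standard consequence of the fact that $P_i/Z(P_i)$ is non-cyclic and hence contains $C_{p_i}\times C_{p_i}$, producing at least $p_i+1$ proper centralizers alongside $P_i$ itself. Choose $P_{i_0}$ with $d(P_{i_0})=d(G)$; combining the product formula with this bound for the other $m-1$ non-abelian factors gives
\[
|cent(P_{i_0})|\le \frac{n}{\prod_{i\ne i_0}(p_i+1)}.
\]
Taking $\log_p$, using the elementary fact $[a]+[b]\le[a+b]$, and relabeling the remaining non-abelian Sylow primes as $p_1,\ldots,p_{m-1}$, one obtains
\[
[\log_p |cent(P_{i_0})|]\le [\log_p n]-\sum_{i=1}^{m-1}[\log_p(p_i+1)].
\]

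The final ingredient is a single-prime logarithmic bound $d(P)\le 2+[\log_p |cent(P)|]$ for a non-abelian $p$-group $P$. This is the technical refinement that replaces the linear estimate of \cite{zar2}: I would prove it by induction on the derived length, showing that passing from $P^{(k)}$ to $P^{(k+1)}$ multiplies the centralizer invariant by a factor of at least $p$, so that the linear recursion of \cite{zar2} becomes logarithmic. Applying this to $P_{i_0}$, together with $p\le p_{i_0}$ and the monotonicity $\log_{p_{i_0}}x\le \log_p x$ for $x\ge 1$, yields
\[
d(G)=d(P_{i_0})\le 2+[\log_p n]-\sum_{i=1}^{m-1}[\log_p(p_i+1)],
\]
as claimed.

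\textbf{Main obstacle.} The crux is the logarithmic $p$-group estimate $d(P)\le 2+[\log_p |cent(P)|]$; the argument in \cite{zar2} only delivers a linear bound, so the improvement demands a more delicate analysis of how $|cent|$ decays along the derived series of a $p$-group. A secondary nuisance is the bookkeeping around the reindexing of the sum and the verification that the bound, proved for the specific $P_{i_0}$, is independent of which non-abelian Sylow attains $d(G)$, which hinges precisely on $p$ being the smallest such prime.
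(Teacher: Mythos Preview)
Your overall architecture matches the paper's proof exactly: decompose $G$ as a direct product of its Sylow subgroups, use the product formula $|cent(G)|=\prod_i |cent(P_i)|$, reduce to the non-abelian factors, pick the one realizing $d(G)$, bound its centralizer count by $n/\prod_{i\ne i_0}(p_i+1)$ (the paper invokes Corollary~2.2 of \cite{zar2} for $|cent(P_i)|\ge p_i+2$, which is precisely the argument you sketch), and then feed this into a logarithmic bound for a single $p$-group.

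The one substantive divergence is your assessment of the ``main obstacle''. You treat the estimate $d(P)\le 2+[\log_p |cent(P)|]$ for a non-abelian $p$-group as the crux requiring a new inductive analysis of how $|cent|$ behaves along the derived series. The paper does not prove this from scratch; it simply observes that the chain-of-centralizers argument already contained in the proof of the main theorem of \cite{zar4}, together with Case~(2) of Lemma~2.1 there, gives $d(P)\le 2+[\log_p(|cent(P)|-1)]$ directly for $p$-groups (proper containments of subgroups in a $p$-group have index at least $p$, which is what turns the linear count into a logarithmic one). So the step you single out as the hard part is in fact extractable from existing literature, and your proposed induction on the derived series, while plausible, is not the route the paper takes. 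Otherwise your plan and the paper's proof coincide.
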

\begin{proof}
First we suppose that $G$ is a finite $p$-group with $|cent(G)|=n$. Then in view of the proof of the main theorem of of \cite{zar4} and  Case(2) of Lemma 2.1 of \cite{zar4}, one can follow that the derived length of  $G$ is at most $[log^{n-1}_p]+2$.

Now we assume that $G$  is a  finite non-abelian nilpotent $C_n$-group. Therefore $G\cong \prod_{i=1}^{m}P_i$. As $d(G)=\max \{d(P_i)\mid 1\leq i\leq m\}$, we may assume that all Sylow $p_i$-subgroups of $G$ are non-abelian. Thus there exists $1\leq j \leq n$ such that $$d(G) \leq [log_{p_j}^{|cent(P_j)|-1}] +2 \leq [log_{p}^{|cent(P_j)|-1}]+2,$$  where $p$ is the smallest prime divisor of $|G|$. It is easy to see that $n=|cent(G)|=\prod_{i=1}^{i=m}|cent(P_i)|$. From this, and by Corollary 2.2 of \cite{zar2}, one can follow that $$|cent(P_j)|\leq n/(\prod_{j\neq i=1}^{m}(p_i+1))$$ and so  $d(G)\leq 2+[log^{n}_p]-\sum_{i=1}^{m-1}[log^{p_i+1}_p] $, as wanted.
\end{proof}

\end{document}